\newtheorem{thm}{Theorem}
\newtheorem{cor}[thm]{Corollary}
\newtheorem{lem}[thm]{Lemma}
\theoremstyle{remark}
\newtheorem{rem}[thm]{Remark}
\newtheorem{ques}[thm]{Question}
\newcommand{\derk}{{\rm Der_k}}
\newcommand{\tor}{\xymatrix{\ar@{-->}[r]&}}
\begin{document}

\author{Rene Baltazar}

\thanks{Research of R. Baltazar  was partially supported by CAPES of Brazil}

\title[Commuting derivations and Simplicity]{Simplicity and commutative bases of derivations in polynomial and power series rings}
\maketitle

\centerline{Universidade Federal do Rio Grande do Sul, Brasil}

\centerline{rene.baltazar@ufrgs.br}

\begin{abstract} The first part of the paper will describe a recent result of K. Retert in (\cite{Ret}) for $k[x_1,\ldots,x_n]$ and $k[[x_1,\ldots,x_n]]$. This result states that if $\mathfrak{D}$ is a set of commute $k$-derivations of $k[x,y]$ such that both $\partial_x \in \mathfrak{D}$ and the ring is $\mathfrak{D}$-simple, then there is $d \in \mathfrak{D}$ such that $k[x,y]$ is $\{\partial_x ,d\}$-simple. As applications, we obtain relationships with known results of A. Nowicki on commutative bases of derivations.

\end{abstract}

\section{introduction}

Let $k$ a field of characteristic zero and $R$ denote either the ring $k[x_1,\ldots,x_n]$ of polynomials over $k$ or the ring $k[[x_1,\ldots,x_n]]$ of formal power series over $k$.

A $k$-\emph{derivation} $d:R\rightarrow R$ of $R$ is a $k$-linear map such that $d(ab)=d(a)b+ad(b)$ for any $a,b \in R$. Denoting by $\derk(R)$ the set of all $k$-derivations of $R$, let $\mathfrak{D} \subset \derk(R)$ be a nonempty family of $k$-derivations. An ideal $I$ of $R$ is called $\mathfrak{D}$-\emph{stable} if $d(I) \subset I$ for all $d \in \mathfrak{D}$. For example, the ideals $0$ and $R$ are always $\mathfrak{D}$-stable. If $R$ has no other $\mathfrak{D}$-stable ideal it is called $\mathfrak{D}$-\emph{simple}. When $\mathfrak{D}= \{d\}$, $d$ is often called a \emph{simple derivation}.

The commuting derivations has been studied by several authors:  Jiantao Li and Xiankun Du (\cite{Jidu}), S. Maubach (\cite{Mau}), A. Nowicki (\cite{Now86}), A.P. Petravchuk (\cite{Pet}), K. Retert (\cite{Ret}), A. van den Essen (\cite{Van}). For example, it is well known that each pair of commuting linear operators on a finite dimensional vector space over an algebraically field has a common eigenvector; in (\cite{Pet}), A.P. Petravchuk proved an analogous statement for derivation of $k[x,y]$ over any field $k$ of characteristic zero. More explicitly, if two derivations of $k[x,y]$ are linearly independent over $k$ and commute, then they have a common Darboux polynomial or they are Jacobian derivations; in (\cite{Jidu}), the authors proved the same result for $k[x_1,\ldots,x_n]$ and $k[[x_1,\ldots,x_n]]$. However, we observe that this result has already been proved by A. Nowicki in ({Now86}) for both rings. Another interesting result was proved by A. Nowicki in (\cite[Theorem 5.]{Now86}) which says that the famous Jacobian Conjecture in $K[x_1,\ldots,x_n]$ is equivalent to the assertion that every commutative basis of $\derk(R)$ is locally nilpotent.

Let $\mathfrak{D}$ a set of commute $k$-derivations of $k[x]$, then $k[x]$ is $\mathfrak{D}$-simple if and only if it is $d$-simple for some $k$-derivation $d \in \mathfrak{D}$ (see \cite[Corollary 2.10.]{Ret}). In $k[x,y]$, as pointed out in \cite{Ret}, up to scalar multiples, these are only sets $\mathfrak{D}$ of two commuting, nonsimple $k$-derivations such that both $\partial_x \in \mathfrak{D}$ and $R$ is $\mathfrak{D}$-simple. Motivated by this, we analyze this result in \cite{Ret} for $R$ and then we propose some connections with known results on commutative bases of derivations in $R$. More precisely, the derivations $\partial_{x_1},\ldots,\partial_{x_{n-1}}$ are not simple $k$-derivations of $R$; however, as will be shown, they can be part of a set $\mathfrak{D}$ of $n$ commuting, nonsimple $k$-derivations such that $R$ is $\mathfrak{D}$-simple. A trivial example is $\mathfrak{D}=\{\partial_{x_1},\ldots,\partial_{x_{n}}\}$. Using the notations in \cite{Now86}, we give a nontrivial commutative base containing only nonsimple $k$-derivations of the free $R$-module $\derk (R)$ such that $R$ is $\mathfrak{D}$-simple and if the Jacobian Conjecture is true in $k[x_1,\ldots,x_n]$, as a consequence of the (\cite[Theorem 5.]{Now86}), we obtain a family of locally nilpotent derivations.

\section{commuting derivations and simplicity}

\begin{lem}\label{lem1.1}
The set of all $k$-derivations of $R$ that commute with $\partial_{x_1},\ldots,\partial_{x_{n-1}}$ is
$$\{p(x_n)\partial_{x_1}+q_1(x_n)\partial_{x_2}+\ldots+q_{n-1}(x_n)\partial_{x_n}\}$$
such that $p, q_1,\ldots,q_{n-1} \in k[x_n]$ \rm(or $k[[x_n]]$ if $R=k[[x_1,\ldots,x_n]]$).
\end{lem}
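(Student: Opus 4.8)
The plan is to use the fact — recalled in the introduction and standard for both rings under consideration — that $\derk(R)$ is a free $R$-module with basis $\partial_{x_1},\ldots,\partial_{x_n}$. Thus an arbitrary $d \in \derk(R)$ can be written uniquely as $d = f_1\partial_{x_1}+\cdots+f_n\partial_{x_n}$ with $f_i = d(x_i) \in R$, and the whole problem reduces to translating the commutation conditions $[d,\partial_{x_j}]=0$, $j=1,\ldots,n-1$, into differential equations for the coefficients $f_i$.

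First I would record the elementary commutator computation. The bracket $[d,\partial_{x_j}] = d\circ\partial_{x_j}-\partial_{x_j}\circ d$ of two $k$-derivations is again a $k$-derivation, hence determined by its values on the generators $x_1,\ldots,x_n$. Evaluating, $[d,\partial_{x_j}](x_i) = d(\delta_{ij}) - \partial_{x_j}(f_i) = -\,\partial f_i/\partial x_j$, so that $[d,\partial_{x_j}] = -\sum_{i=1}^{n}(\partial f_i/\partial x_j)\,\partial_{x_i}$. Since $\partial_{x_1},\ldots,\partial_{x_n}$ are $R$-linearly independent, $d$ commutes with $\partial_{x_j}$ if and only if $\partial f_i/\partial x_j = 0$ for every $i=1,\ldots,n$.

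Next I would impose these conditions for $j=1,\ldots,n-1$ at once: each $f_i$ has vanishing partial derivative with respect to every one of $x_1,\ldots,x_{n-1}$. In characteristic zero this forces $f_i \in k[x_n]$ when $R=k[x_1,\ldots,x_n]$ — a monomial of $f_i$ can survive only if it involves none of $x_1,\ldots,x_{n-1}$ — and likewise $f_i \in k[[x_n]]$ when $R=k[[x_1,\ldots,x_n]]$, by the same monomial-by-monomial argument applied to the formal expansion of $f_i$. Setting $p = f_1$ and $q_i = f_{i+1}$ for $i=1,\ldots,n-1$ then gives $d$ in the asserted form. The reverse inclusion is immediate from the displayed commutator formula: if all $f_i$ lie in $k[x_n]$ (resp. $k[[x_n]]$), then $\partial f_i/\partial x_j = 0$ for $j\le n-1$, whence $[d,\partial_{x_j}]=0$ for each such $j$.

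I do not expect a genuine obstacle here; the one point deserving a word of care is the structural input that every $k$-derivation of $R$ is an $R$-linear combination of the $\partial_{x_i}$. For the polynomial ring this is routine, and for the power series ring it is exactly the freeness of $\derk(R)$ on $\partial_{x_1},\ldots,\partial_{x_n}$ that is used throughout the paper, so it may be invoked without further comment.
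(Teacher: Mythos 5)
Your proposal is correct and follows essentially the same route as the paper: write $d$ in the basis $\partial_{x_1},\ldots,\partial_{x_n}$ of the free module $\derk(R)$ and translate $[d,\partial_{x_j}]=0$ into the vanishing of $\partial f_i/\partial x_j$ for $j\le n-1$, which forces the coefficients to lie in $k[x_n]$ (resp.\ $k[[x_n]]$). The only cosmetic difference is that you compute the full commutator in coordinates, while the paper just evaluates the commutation relation on each variable; the content is identical.
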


\begin{proof}
Is clear that all derivations of this form commute with the derivations $\partial_{x_1},\ldots,\partial_{x_{n-1}}$. For the converse, let $$d^*=p^*(x_1,\ldots,x_n)\partial_{x_1}+q^*_1(x_1,\ldots,x_n)\partial_{x_2}+\ldots+q^*_{n-1}(x_1,\ldots,x_n)\partial_{x_n}$$ be a $k$-derivation of $R$ that commute with $\partial_{x_1},\ldots,\partial_{x_{n-1}}$. Then $$0=d^*(\partial_{x_i}(x_1))=\partial_{x_i}(d^*(x_1))=\partial_{x_i}(p^*(x_1,\ldots,x_n))$$ for all $i=1,\ldots,n-1$. Thus, $p^*(x_1,\ldots,x_n) \in K[x_n](K[[x_n]])$. Similarly, we can proof that $q^*_{i}(x_1,\ldots,x_n) \in K[x_n](K[[x_n]])$.
\end{proof}

Let $\mathfrak{D}= \{\delta_1,\ldots,\delta_s\}$ be any finite set of $k$-derivation of $R$ that commute with $\partial_{x_1},\ldots,\partial_{x_{n-1}}$, but not necessarily each with other. By Lemma \ref{lem1.1}, each $\delta_i$ is of the form $$\delta_i=p_i(x_n)\partial_{x_1}+q_{(1,i)}(x_n)\partial_{x_2}+\ldots+q_{(n-1,i)}(x_n)\partial_{x_n}.$$

We denote by $v_{n-1}(x_n)$ the greatest common divisor of $q_{(n-1,1)},\ldots,q_{(n-1,s)}$. We have the following characterization for the simplicity of $R$.

\begin{lem} \label{lem1.2}
Using the notations above, $R$ is $\mathfrak{D}$-simple if and only if $v_{n-1}(x_n)$ is a unit in $k[x_n]$ \rm(or $k[[x_n]]$).
\end{lem}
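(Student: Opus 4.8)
The plan is to analyze the $\mathfrak{D}$-stable ideals of $R$ directly, exploiting the fact that every $\delta_i$ has coefficients in $k[x_n]$ (resp.\ $k[[x_n]]$) only, so that applying the $\delta_i$ changes the $x_1,\ldots,x_{n-1}$-degrees in a controlled way, while the truly essential action happens in the last variable. First, for the ``only if'' direction, I would assume $v_{n-1}(x_n)$ is not a unit and produce a proper nonzero $\mathfrak{D}$-stable ideal. Since $\partial_{x_1},\ldots,\partial_{x_{n-1}}$ are among the derivations commuting with which the $\delta_i$ were chosen — wait, more carefully: $\mathfrak{D}$ need not contain the $\partial_{x_j}$, but a $\mathfrak{D}$-stable ideal need not be stable under them. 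So the right object is the ideal $I$ generated by all iterated images $\delta_{i_1}\cdots\delta_{i_m}(f)$ for $f$ ranging over a suitable seed. The key observation is that because each coefficient lies in $k[x_n]$, if $v_{n-1}$ divides some element controlling the ``$x_n$-direction'', that divisibility propagates. Concretely I expect to show that the ideal $(v_{n-1}(x_n))R$, or an ideal built from a non-unit factor of $v_{n-1}$, is $\mathfrak{D}$-stable: applying $\delta_i$ to $v_{n-1}(x_n)$ gives $v_{n-1}'(x_n)\,q_{(n-1,i)}(x_n)$, and since $v_{n-1}\mid q_{(n-1,i)}$ this is a multiple of $v_{n-1}$ only if we are careful about the $v_{n-1}'$ term — so instead I would take the ideal generated by $v_{n-1}$ together with the $x_n$-derivatives it generates, and check it is proper precisely because all the $\delta_i(\cdot)$ stay inside the ideal generated by $x_n$-polynomials divisible by a fixed non-unit; one clean choice is: if $\pi$ is an irreducible factor of $v_{n-1}$ (or $x_n$ itself in the power-series case), consider whether $(\pi)$ is $\mathfrak{D}$-stable, and if not, pass to the radical-type construction $\bigcap_m \delta^{(m)}$-preimages. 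The technically cleanest route is probably to localize/complete at $\pi$ and reduce to a one-variable statement already covered by \cite[Corollary 2.10.]{Ret}.

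For the ``if'' direction, suppose $v_{n-1}(x_n)$ is a unit and let $I \neq 0$ be a $\mathfrak{D}$-stable ideal; I must show $I = R$. Pick $0 \neq f \in I$ of minimal degree in the variables $x_1,\ldots,x_{n-1}$ (in the power series case, minimal order; one has to be slightly careful, but the monomial with smallest total degree in $x_1,\dots,x_{n-1}$ still makes sense after a harmless linear change, or one argues coefficient-wise). Applying $\partial_{x_1},\ldots,\partial_{x_{n-1}}$ would reduce this degree — but those are not in $\mathfrak{D}$. Instead, here is where $v_{n-1}$ being a unit enters: using the $\delta_i$ and the hypothesis that $\gcd(q_{(n-1,1)},\ldots,q_{(n-1,s)})=1$, I can form a $k[x_n]$-combination $\sum_i a_i(x_n)\delta_i$ whose $\partial_{x_n}$-coefficient is $1$; call this derivation $\Delta = b(x_n)\partial_{x_1} + \cdots + \partial_{x_n}$. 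Then $\Delta$ generates, together with the residual freedom, enough to mimic a straightening. Actually the decisive point: modulo $I$, I would use $\Delta$ to eliminate $x_n$-dependence and then show the remaining obstruction collapses because a nonzero constant appears. I expect to show $I$ contains a nonzero element of $k[x_n]$ (resp.\ $k[[x_n]]$) — obtained by repeatedly applying the $\delta_i$ to kill $x_1,\ldots,x_{n-1}$ using the minimality — and then, since $\Delta$ restricted to that subring acts like $\partial_{x_n} + (\text{lower order in }x_n)$, $\mathfrak{D}$-simplicity of the one-variable subring forces $1 \in I$.

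The main obstacle I anticipate is the bookkeeping in the ``if'' direction: organizing an induction that simultaneously lowers the $x_1,\dots,x_{n-1}$-degree and controls the $x_n$-coefficients, given that the available derivations $\delta_i$ mix all the partials with $x_n$-dependent coefficients rather than giving me the clean $\partial_{x_j}$. The role of the $\gcd$ condition — that it lets me synthesize a derivation with $\partial_{x_n}$-coefficient equal to $1$ (at least after localizing, or exactly in the polynomial case via a Bézout identity in $k[x_n]$, noting $v_{n-1}$ a unit means the $q_{(n-1,i)}$ generate the unit ideal) — is the crux, and I would isolate it as the first lemma-within-the-proof. The power-series case requires replacing ``unit $=$ nonzero constant'' by ``unit $=$ nonzero constant term'', i.e.\ $v_{n-1}(0)\neq 0$, and then Bézout still holds in $k[[x_n]]$, so the argument is uniform. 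If a direct combinatorial argument proves unwieldy, the fallback is to pass to $R' = k(x_n)\otimes$ (the other variables) or to complete, reducing to the known $n=1$ case plus a faithfully-flat descent of stability, but I expect the elementary argument to go through.
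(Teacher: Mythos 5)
The decisive gap is in the ``if'' direction. You explicitly decline to use $\partial_{x_1},\ldots,\partial_{x_{n-1}}$ (``those are not in $\mathfrak{D}$'') and try to prove the statement for an arbitrary finite set of derivations commuting with the partials; but in that generality the statement is false: take $\mathfrak{D}=\{\partial_{x_n}\}$, so $v_{n-1}=1$ is a unit, yet $\langle x_1\rangle$ is $\mathfrak{D}$-stable. Hence no version of your plan --- lowering the $x_1,\ldots,x_{n-1}$-degree by applying the $\delta_i$ until $I$ contains a nonzero element of $k[x_n]$ --- can be completed: applying a $\delta_i$ need not lower that degree (its $\partial_{x_n}$-component preserves it, and its remaining components may annihilate the chosen element). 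The lemma is meant, and is applied in Theorem \ref{thm1}, under the additional hypothesis $\partial_{x_1},\ldots,\partial_{x_{n-1}}\in\mathfrak{D}$ (this is what the ``without loss of generality'' step in the paper's proof amounts to), and that hypothesis is exactly the missing mechanism: a $\mathfrak{D}$-stable ideal $I$ is stable under each $r_i(x_n)\delta_i$, hence under a B\'ezout combination $\sum_i r_i\delta_i=u_1\partial_{x_1}+\cdots+u_{n-1}\partial_{x_{n-1}}+\partial_{x_n}$ (possible because $v_{n-1}$ being a unit means the $q_{(n-1,i)}$ generate the unit ideal of $k[x_n]$, resp. $k[[x_n]]$); subtracting the $u_l\partial_{x_l}$, which also stabilize $I$, shows $I$ is stable under all of $\partial_{x_1},\ldots,\partial_{x_n}$, and in characteristic zero such an ideal is $0$ or $R$. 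Your B\'ezout derivation $\Delta$ is the correct first step, but without the partials in $\mathfrak{D}$ the ``straightening / elimination modulo $I$'' you gesture at has nothing to run on.

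The ``only if'' direction you essentially have, but you talk yourself out of the one-line argument: $\delta_i(v_{n-1})=q_{(n-1,i)}\,v_{n-1}'$ lies in $\langle v_{n-1}\rangle$ simply because $v_{n-1}$ divides $q_{(n-1,i)}$ --- no care about the factor $v_{n-1}'$ is needed --- so $\langle v_{n-1}\rangle$ itself is a proper nonzero $\mathfrak{D}$-stable ideal whenever $v_{n-1}$ is a nonzero nonunit; the detours through irreducible factors, radical-type constructions and localization are unnecessary. You also omit the degenerate case $q_{(n-1,1)}=\cdots=q_{(n-1,s)}=0$ (so $v_{n-1}=0$ is not a unit), where one takes $\langle x_n\rangle$ instead, as the paper does.
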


\begin{proof}
If all $q_{(n-1,i)}=0$, so all the $k$-derivation in $\mathfrak{D}$ stabilize the nonzero ideal $\langle x_n\rangle$; in this case, $R$ is not $\mathfrak{D}$-simple. Then we assume that some $q_{(n-1,i)}\neq0$. If $v_{n-1}(x_n)$ is not a unit, each $\delta_i$ stabilize the nontrivial ideal $\langle v_{n-1}(x_n)\rangle$. Therefore $R$ is not $\mathfrak{D}$-simple.

Conversely, assume that $v_{n-1}(x_n)$ is a unit and notice that, in this case, there are polynomials $r_i(x_n)$ such that $$\sum_{i=1}^s r_i(x_n)q_{(n-1,i)}(x_n)=v_{n-1}(x_n),$$ multiplicand by the inverse of $v_{n-1}(x_n)$, we may assume that $\sum_{i=1}^s r_i(x_n)q_{(n-1,i)}(x_n)=1.$ Without loss of generalization, let $\delta_1=\partial_{x_1}$,\ldots,$\delta_{n-1}=\partial_{x_{n-1}}$; thus, $$\sum_{i=n}^s r_i(x_n)q_{(n-1,i)}(x_n)=1.$$ Let $I$ be a $\mathfrak{D}$-ideal. Then since $I$ is stabilized by each $\delta_i$, $I$ is stabilized by $r_i(x_n)\delta_i$, and, then, by the $k$-derivation $$\sum_{i=n}^s r_i(x_n)\delta_i=\left(\sum_{i=n}^s r_i(x_n)p_i(x_n)\right)\partial_{x_1}+ \ldots+\left(\sum_{i=n}^s r_i(x_n)q_{(n-1,i)}(x_n)\right)\partial_{x_n}.$$ Therefore, $I$ is stabilized by $\partial_{x_1},\ldots,\partial_{x_{n-1}}$ and a $k$-derivation of the form $$u_1(x_n)\partial_{x_1}+\ldots+u_{n-1}(x_n)\partial_{x_{n-1}}+ \partial_{x_n}$$ for $u_i(x_n) \in K[x_n](K[[x_n]])$. Thus $I$ is stabilized by $\partial_{x_1},\ldots,\partial_{x_{n-1}},\partial_{x_n}$ and, then, we deduce that $I$ must be a trivial ideal.
\end{proof}

Note that until now we only assume that all the $k$-derivations commute with $\partial_{x_1},\ldots,\partial_{x_{n-1}}$, not that all elements commute with each other. Using the previous lemmas, the following theorem will show that if $R$ is $\mathfrak{D}$-simple under a set $\mathfrak{D}$ of commuting $k$-derivations that contains $\partial_{x_1},\ldots,\partial_{x_{n-1}}$, then $R$ is simple under a subset of $n$ commuting nonsimple $k$-derivations.

\begin{thm} \label{thm1}Let $\mathfrak{D}$ a set of $k$-derivations of $R$ such that $\partial_{x_1},\ldots,\partial_{x_{n-1}} \in \mathfrak{D}$. Then the derivations of $\mathfrak{D}$ commute with each other if and only if one the following two cases holds.

(a) Each element $\delta_i$ of $\mathfrak{D}$ has the form $\delta_i=h_1^i(x_n)\partial_{x_1}+\ldots+h_{n-1}^i(x_{n})\partial_{x_{n-1}}$, for some $h_j^i(x_n) \in k[x_n]$ \rm(or $k[[x_n]]$).

(b) There exist $v_1(x_n),\ldots,v_n(x_n) \in k[x_n]$ \rm(or $k[[x_n]]$), such that for each $\delta_i \in \mathfrak{D}$ there are scalars $\lambda_i,c_1^i,\ldots,c_{n-1}^i \in k$ such that $$\delta_i=\sum_{l=1}^{n-1}(\lambda_i v_l(x_n)+c_l^i)\partial_{x_l}+ \lambda_i v_n(x_n)\partial_{x_n}.$$

If, in addition, $R$ is $\mathfrak{D}$-simple, then $v_n(x_n)$ must be some nonzero scalar $\beta$ and, also, some $\lambda_j$ is not zero. In this case, $R$ is also simple under the subset $$\left\{\partial_{x_1},\ldots,\partial_{x_{n-1}}, \sum_{l=1}^{n-1}(\lambda_j v_l(x_n)+c_l^j)\partial_{x_l}+ \lambda_j \beta \partial_{x_n}\right\}.$$

\end{thm}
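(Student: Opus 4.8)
The plan is to verify the two cases by analysing the commutator relations among the derivations, then specialise to the $\mathfrak{D}$-simple situation using Lemma~\ref{lem1.2}. First I would observe that by Lemma~\ref{lem1.1} every $\delta_i \in \mathfrak{D}$ has the form $\delta_i = p_i(x_n)\partial_{x_1} + q_{(1,i)}(x_n)\partial_{x_2} + \ldots + q_{(n-1,i)}(x_n)\partial_{x_n}$, since each $\delta_i$ commutes with $\partial_{x_1},\ldots,\partial_{x_{n-1}}$ which lie in $\mathfrak{D}$. The only nontrivial commutator constraint is between two such derivations $\delta_i$ and $\delta_j$: writing out $[\delta_i,\delta_j]$, only the $\partial_{x_n}$-components contribute to $x_n$-derivatives, so $[\delta_i,\delta_j] = q_{(n-1,i)} \cdot (\delta_j)'  - q_{(n-1,j)} \cdot (\delta_i)'$ where $(\delta_i)'$ denotes the derivation obtained by differentiating each coefficient of $\delta_i$ with respect to $x_n$. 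Setting this to zero componentwise gives, for each coefficient function $f \in \{p, q_1,\ldots,q_{n-1}\}$, the relation $q_{(n-1,i)} f_j' = q_{(n-1,j)} f_i'$.

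Next I would split according to whether all $q_{(n-1,i)}$ vanish. If they all vanish, then the relation forces $f_i' = 0$ whenever some $q_{(n-1,j)} \neq 0$; but if all $q_{(n-1,i)} = 0$ we get no constraint beyond $\delta_i = h_1^i(x_n)\partial_{x_1} + \ldots + h_{n-1}^i(x_n)\partial_{x_{n-1}}$, which is exactly case (a). Otherwise some $q_{(n-1,j)} =: v_n(x_n)$ is nonzero; I would normalise by this $\delta_j$ and show that for every other coefficient the ratio $f_i'/q_{(n-1,i)}$ is independent of $i$, hence equals a fixed rational function $v_l'(x_n)/v_n(x_n)$ (after integrating). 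Concretely, the relation $q_{(n-1,i)} f_j' = q_{(n-1,j)} f_i'$ says the vector-valued function $(\text{coeffs of }\delta_i)'$ is proportional to $(\text{coeffs of }\delta_j)'$ with ratio $q_{(n-1,i)}/q_{(n-1,j)}$; choosing antiderivatives $v_l(x_n)$ of the common direction and using that the fibre of derivatives determines the coefficient up to an additive constant $c_l^i \in k$, I arrive at $\delta_i = \sum_{l=1}^{n-1}(\lambda_i v_l(x_n) + c_l^i)\partial_{x_l} + \lambda_i v_n(x_n)\partial_{x_n}$ with $\lambda_i \in k$ the proportionality constant (and $c_n^i = 0$ forced because $\partial_{x_n}$ has no constant term available — more precisely because $q_{(n-1,i)} = \lambda_i v_n$ exactly, with no additive freedom, which I should double-check is consistent with the normalisation). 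The converse direction — that derivations of the forms (a) or (b) do commute — is a direct computation.

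Finally, for the $\mathfrak{D}$-simple claim: in case (a) all $q_{(n-1,i)} = 0$, so by Lemma~\ref{lem1.2} (with $v_{n-1} = 0$, not a unit) $R$ is not $\mathfrak{D}$-simple; hence simplicity forces case (b). In case (b), $q_{(n-1,i)} = \lambda_i v_n(x_n)$, so $v_{n-1}(x_n) = \gcd_i(\lambda_i v_n(x_n))$ which is a unit iff $v_n(x_n)$ is a unit and some $\lambda_i \neq 0$; since $v_n \in k[x_n]$ (or $k[[x_n]]$) being a unit means it is a nonzero scalar $\beta$, Lemma~\ref{lem1.2} gives exactly that $v_n = \beta$ and some $\lambda_j \neq 0$. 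For that $j$, the three-element-type subset $\{\partial_{x_1},\ldots,\partial_{x_{n-1}}, \delta_j\}$ still satisfies the hypotheses of Lemma~\ref{lem1.2} with the same $v_{n-1} = \lambda_j \beta$, a unit, so $R$ is simple under it. The main obstacle I anticipate is the bookkeeping in the case-(b) derivation: carefully extracting the proportionality constant $\lambda_i$ uniformly across all coefficients and all indices, handling the degenerate sub-case where only some $q_{(n-1,i)}$ vanish, and making sure the additive constants $c_l^i$ are genuinely in $k$ (i.e. that the "antiderivative up to a constant" step does not smuggle in $x_n$-dependence) — this requires using that if $g'(x_n) = \lambda v_l'(x_n)$ then $g - \lambda v_l \in k$, which is where characteristic zero is used.
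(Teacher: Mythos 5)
Your proposal is correct and follows essentially the same route as the paper: Lemma \ref{lem1.1} to put every $\delta_i$ in the form $p_i(x_n)\partial_{x_1}+\ldots+q_{(n-1,i)}(x_n)\partial_{x_n}$, the commutator identity $q_{(n-1,i)}f_j'=q_{(n-1,j)}f_i'$ (the paper gets it by applying $[\delta_i,\delta_1]$ to each variable, you by writing the bracket componentwise), integration of the resulting proportionalities to get the constants $\lambda_i,c_l^i\in k$, and Lemma \ref{lem1.2} for the simplicity claim. The only caveat, which you inherit from the paper's own statement rather than introduce, is that in $k[[x_n]]$ a unit need not be a scalar, so the assertion ``$v_n$ must be a nonzero scalar $\beta$'' is literally accurate only in the polynomial case, while the final simplicity conclusion for the subset holds in both cases exactly as you argue.
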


\begin{proof} If either of the conditions is met, is clear that all $k$-derivations of $\mathfrak{D}$ will commute.

Conversely, let $\mathfrak{D}=\{\delta_1,\ldots,\delta_m \}$ be according to the hypotheses of the theorem. By Lemma \ref{lem1.1}, each $\delta_i$ is of the form $\delta_i=p_i(x_n)\partial_{x_1}+q_{(1,i)}(x_n)\partial_{x_2}+\ldots+q_{(n-1,i)}(x_n)\partial_{x_n}.$

If all $q_{(n-1,i)}(x_n)$ are zero, then the first case holds. So, without loss of generality, we assume that $q_{(n-1,1)}(x_n) \neq 0$ and observe that: $$\delta_i(\delta_1(x_n))=\delta_i(q_{(n-1,1)}(x_n))=q_{(n-1,i)}(x_n)\partial_{x_n}(q_{(n-1,1)}(x_n)),$$ $$\delta_1(\delta_i(x_n))=\delta_1(q_{(n-1,i)}(x_n))=q_{(n-1,1)}(x_n)\partial_{x_n}(q_{(n-1,i)}(x_n)).$$

Since $\delta_i$ and $\delta_1$ commute, $$q_{(n-1,i)}(x_n)\partial_{x_n}(q_{(n-1,1)}(x_n))=q_{(n-1,1)}(x_n)\partial_{x_n}(q_{(n-1,i)}(x_n)).$$ Then, this equation must also hold in the ring of fractions, hence we deduce that $$\frac{q_{(n-1,i)}(x_n)\partial_{x_n}(q_{(n-1,1)}(x_n))-q_{(n-1,1)}(x_n)\partial_{x_n}(q_{(n-1,i)}(x_n))}{(q_{(n-1,1)}(x_n))^2}=0.$$
In other words, $$\left(\frac{q_{(n-1,i)}(x_n)}{q_{(n-1,1)}(x_n)}\right)'=0.$$

Then there is some $\lambda_i \in k$ such that $q_{(n-1,i)}(x_n)=\lambda_iq_{(n-1,1)}(x_n).$

Now, we observe that $$\delta_i(\delta_1(x_1))=\delta_i(p_1(x_n))=q_{(n-1,i)}(x_n)\partial_{x_n}(p_1(x_n)),$$ $$\delta_1(\delta_i(x_1))=\delta_1(p_i(x_n))=q_{(n-1,1)}(x_n)\partial_{x_n}(p_i(x_n)).$$

Since $\delta_i$ and $\delta_1$ commute and both $k[x_n]$ and $k[[x_n]]$ are domains, $$\lambda_i\partial_{x_n}(p_1(x_n))=\partial_{x_n}(p_i(x_n)).$$

Then there is some $c_1^i\in k$ such that $p_i(x_n)=\lambda_ip_1(x_n)+c_1^i$. Finally, doing the same argument for the other variables, we prove the desired result.

Now we suppose, in addition, that $R$ is $\mathfrak{D}$-simple. By Lemma \ref{lem1.2}, the greatest common divisor of $q_{(n-1,1)}(x_n),\ldots,q_{(n-1,m)}(x_n)$ must be a unit. However, we have demonstrated that all the $q_{(n-1,i)}(x_n)$ are scalar multiples, then at least one of the $q_{(n-1,i)}(x_n)$ must be a unit, we assume that $q_{(n-1,j)}(x_n)$ is a unit. Since $I$ is stabilized by $$\{\partial_{x_1},\ldots,\partial_{x_{n-1}},p_j(x_n)\partial_{x_1}+q_{(1,j)}(x_n)\partial_{x_2}+\ldots+q_{(n-1,j)}(x_n)\partial_{x_n}\},$$ $I$ must be trivial because, in this case, $I$ is stabilized by $\partial_{x_1},\ldots,\partial_{x_{n-1}}, \partial_{x_{n}}$. Therefore, $R$ is $\{\partial_{x_1},\ldots,\partial_{x_{n-1}},p_j(x_n)\partial_{x_1}+q_{(1,j)}(x_n)\partial_{x_2}+\ldots+q_{(n-1,j)}(x_n)\partial_{x_n}\}$-simple which completes the proof.
\end{proof}

\begin{rem}  A. Nowicki in (\cite[Theorem 2.5.5.]{Now}) proved that every $k$-derivation of a commutative base of $\derk(k[x_1,\ldots,x_n])$ is a special $k$-derivation. This means that the divergence $d^*$ of $d$ is $0$. Moreover, is easy to prove that the set $$\{\partial_{x_1},\ldots,\partial_{x_{n-1}},\sum_{l=1}^{n-1}(\lambda_j v_l(x_n)+c_l^j)\partial_{x_l}+ \lambda_j \beta \partial_{x_n} \}$$ obtained by the previous theorem is a commutative base of $\derk(k[x_1,\ldots,x_n])$. Thus, in particular, $\sum_{l=1}^{n-1}(\lambda_j v_l(x_n)+c_l^j)\partial_{x_l}+ \lambda_j \beta \partial_{x_n}$ is a special derivation. However, this is easily verified since $$d^*=\sum_{l=1}^{n-1} \partial_{x_l}(\lambda_j v_l(x_n)+c_l^j)+\partial_{x_n}(\lambda_j \beta )=0.$$

\end{rem}

\begin{cor}
Let $\mathfrak{D}$ a set of commute $k$-derivations of $R$ such that $R$ is $\mathfrak{D}$-simple and $\partial_{x_1},\ldots,\partial_{x_{n-1}} \in \mathfrak{D}$. Then, there exist $d \in \mathfrak{D}$ and there exist elements $f_1,\ldots,f_n \in R$ such that $d(f_n)=1$ and $d(f_i)=0$, for any $i=1,\ldots,n-1$, so that $R$ is $\{\partial_{x_1},\ldots,\partial_{x_{n-1}},d \}$-simple.
\end{cor}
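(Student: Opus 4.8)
The plan is to read this off almost directly from Theorem \ref{thm1}. First I would dispose of case (a): if every $\delta_i \in \mathfrak{D}$ had the form $h_1^i(x_n)\partial_{x_1}+\ldots+h_{n-1}^i(x_n)\partial_{x_{n-1}}$, then $\delta_i(x_n)=0$ for all $i$, so the nonzero proper ideal $\langle x_n\rangle$ would be $\mathfrak{D}$-stable, contradicting $\mathfrak{D}$-simplicity. Hence case (b) holds, and the final assertion of Theorem \ref{thm1} gives $v_n(x_n)=\beta$ for some $\beta\in k\setminus\{0\}$, an index $j$ with $\lambda_j\neq 0$, and the $\mathfrak{D}$-simplicity of $R$ passes to $\{\partial_{x_1},\ldots,\partial_{x_{n-1}},d\}$, where $d:=\delta_j=\sum_{l=1}^{n-1}(\lambda_j v_l(x_n)+c_l^j)\partial_{x_l}+\lambda_j\beta\,\partial_{x_n}$.

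It then remains only to exhibit the elements $f_1,\ldots,f_n$. Since $d(x_n)=\lambda_j\beta\in k\setminus\{0\}$, I would take $f_n:=(\lambda_j\beta)^{-1}x_n$, so that $d(f_n)=1$. For $i=1,\ldots,n-1$, because $\operatorname{char}k=0$ the element $\lambda_j v_i(x_n)+c_i^j$ has an antiderivative $A_i(x_n)\in k[x_n]$ (respectively $k[[x_n]]$), meaning $A_i'(x_n)=\lambda_j v_i(x_n)+c_i^j$; then I would set $f_i:=x_i-(\lambda_j\beta)^{-1}A_i(x_n)\in R$. A one-line check gives $d(f_i)=\bigl(\lambda_j v_i(x_n)+c_i^j\bigr)-(\lambda_j\beta)^{-1}A_i'(x_n)\cdot\lambda_j\beta=0$, as required, so the same derivation $d$ serves both conclusions of the corollary.

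I do not expect a real obstacle here: the statement is essentially a repackaging of Theorem \ref{thm1}, and the only point that invokes a hypothesis not already used in that theorem is the existence of the antiderivatives $A_i$, which is precisely where $\operatorname{char}k=0$ is needed. If one wanted to add a remark, one could note that $f_1,\ldots,f_n$ play the role of a new coordinate system in which $d$ becomes $\lambda_j\beta\,\partial_{f_n}$; this makes transparent why adjoining $d$ to $\partial_{x_1},\ldots,\partial_{x_{n-1}}$ restores simplicity, and it is in effect the ``integrated'' form of the last step of the proof of Theorem \ref{thm1}.
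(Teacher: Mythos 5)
Your proposal is correct and follows essentially the same route as the paper: invoke Theorem \ref{thm1} to get $d=\sum_{l=1}^{n-1}(\lambda_j v_l(x_n)+c_l^j)\partial_{x_l}+\lambda_j\beta\,\partial_{x_n}$ with $R$ being $\{\partial_{x_1},\ldots,\partial_{x_{n-1}},d\}$-simple, set $f_n=(\lambda_j\beta)^{-1}x_n$, and use characteristic zero to integrate and produce $f_i=x_i-f_i^*$ with $d(f_i)=0$. Your explicit elimination of case (a) via the ideal $\langle x_n\rangle$ and the explicit factor $(\lambda_j\beta)^{-1}$ in the antiderivative are just slightly more detailed versions of what the paper leaves implicit.
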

\begin{proof}
By the previous theorem, we know that there is $d \in \mathfrak{D}$ of the form $$d=\sum_{l=1}^{n-1}(\lambda_j v_l(x_n)+c_l^j)\partial_{x_l}+ \lambda_j \beta \partial_{x_n}$$ such that $R$ is $\{\partial_{x_1},\ldots,\partial_{x_{n-1}},d \}$-simple.
Since $\beta$ and $\lambda_j$, in the theorem, are nonzero scalars, we denote $f_n=(\lambda_j\beta)^{-1}x_n$, thus $d(f_n)=1$.

Let $f_l^* \in k[x_n]$ (or $k[[x_n]]$) such that $d(f_l^*)=\lambda_j v_l(x_n)+c_l^j$. Since $c(k)=0$, $f_l^*$ exist. Then, let $f_l=x_l-f_l^*$, hence $d(f_l)=0$, for any $l=1,\ldots,n-1$. This completes the proof.
\end{proof}

\begin{rem} The previous corollary is a particular case of an important theorem about the characterization of commutative basis of $\derk(R)$ such that $R$ denote either $k[x_1,\ldots,x_n]$ or $k[[x_1,\ldots,x_n]]$, which in our case is more evident its proof (see \cite[Theorem 2.]{Now86}).

\end{rem}
For the remainder of this note we assume that $R$ is the ring $k[x_1,\ldots,x_n]$ of polynomials over $k$ and $\{\partial_{x_1},\ldots,\partial_{x_{n-1}},d \}$ is as in the previous theorem and also we recall the following definitions.

We recall from \cite{Now} that a $k$-derivation $d$ of $k[x_1,\ldots,x_n]$ is called \emph{locally nilpotent} if for each $f \in R$ exists a natural number $n$ such that $d^n(f)=0$ and we say that a basis $\{d_1,\ldots,d_n \}$ of $ \derk(k[x_1,\ldots,x_n])$ is \emph{locally nilpotent} if every derivation $d_i$ is locally nilpotent for $i=1,\ldots,n$.

We remember also that the \emph{Jacobian Conjecture} states that if $F=(F_1,\ldots,F_n)$ is a polynomial map such that the Jacobian matrix is invertible, then $F$ has a polynomial inverse (see \cite{Now}).

\begin{thm}\cite[Theorem.5.]{Now86} Let $R=k[x_1,\ldots,x_n]$ the polynomial ring in $n$ variables over $k$. The following conditions are equivalent:
\begin{enumerate}
\item The Jacobian Conjecture is true in the $n$-variable case.
\item Every commutative basis of the $R$-module $\derk(R)$ is locally nilpotent.
\item Every commutative basis of the $R$-module $\derk(R)$ is locally finite.
\end{enumerate}

\end{thm}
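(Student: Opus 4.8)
The plan is to establish the cycle $(1)\Rightarrow(2)\Rightarrow(3)\Rightarrow(1)$, after first recording the dictionary between commutative bases of $\derk(R)$ and polynomial maps with unit Jacobian. Recall that $\derk(R)=\bigoplus_i R\,\partial_{x_i}$ and the module of K\"ahler differentials $\Omega^1_{R/k}=\bigoplus_i R\,dx_i$ are free of rank $n$ and mutually dual. Given a commutative basis $\{d_1,\dots,d_n\}$, let $\{\omega_1,\dots,\omega_n\}$ be the dual basis of $\Omega^1_{R/k}$, so $\omega_j(d_i)=\delta_{ij}$. Evaluating the Cartan-type identity $d\omega_k(d_i,d_j)=d_i(\omega_k(d_j))-d_j(\omega_k(d_i))-\omega_k([d_i,d_j])$ on the basis kills the first two terms, so the $d_i$ commute if and only if every $\omega_k$ is closed; since $\mathrm{char}\,k=0$ the algebraic de Rham complex of $R$ is exact in positive degrees, whence $\omega_k=dF_k$ for some $F_k\in R$. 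Thus $d_i(F_j)=\delta_{ij}$, and writing $d_i=\sum_j a_{ij}\partial_{x_j}$ and $J=(\partial F_j/\partial x_k)$ one gets $A\,J^{\mathrm T}=I$, so $\det J\in k^{*}$; conversely, any polynomial map $F=(F_1,\dots,F_n)$ with $\det J\in k^{*}$ yields a commutative basis in this way. I will therefore identify a commutative basis with such an $F$, normalized so that $F(0)=0$.

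For $(1)\Rightarrow(2)$: if the Jacobian Conjecture holds, $F$ is a polynomial automorphism, so $k[F_1,\dots,F_n]=R$; since a $k$-derivation of $R$ is determined by its values on the generators $F_1,\dots,F_n$, we get $d_i=\partial/\partial F_i=\phi\circ\partial_{x_i}\circ\phi^{-1}$, where $\phi$ is the automorphism $x_i\mapsto F_i$. Local nilpotence is preserved under conjugation by an automorphism, so each $d_i$ is locally nilpotent, giving $(2)$. The implication $(2)\Rightarrow(3)$ is immediate: if $d^{N}(g)=0$, then $g,dg,\dots,d^{N-1}g$ span a finite-dimensional $d$-stable subspace containing $g$.

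For $(3)\Rightarrow(1)$ I would proceed in two steps. Step one: promote ``locally finite'' to ``locally nilpotent''. A locally finite $k$-derivation of $R$ (char $0$) has a Jordan--Chevalley decomposition $d=s+\nu$ with $s,\nu\in\derk(R)$, $s$ semisimple, $\nu$ locally nilpotent and $s\nu=\nu s$. If $s\neq 0$ it induces a nontrivial grading $R=\bigoplus_\lambda R_\lambda$ by its eigenvalues, with $R_\lambda R_\mu\subseteq R_{\lambda+\mu}$ and $\nu(R_\lambda)\subseteq R_\lambda$. Writing $F_j=\sum_\lambda F_{j,\lambda}$ and comparing homogeneous components in $d_i(F_j)=\delta_{ij}\in R_0$ forces $\nu(F_{j,\lambda})=-\lambda F_{j,\lambda}$ for $\lambda\neq 0$; a locally nilpotent operator has no nonzero eigenvalue, so $F_{j,\lambda}=0$, hence $F_j\in R_0$ and $s(F_j)=0$ for all $j$. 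Since $\{dF_j\}$ is a basis of $\Omega^1_{R/k}$ and $\langle s,dF_j\rangle=s(F_j)=0$, we conclude $s=0$; so every $d_i$ is locally nilpotent. Step two: show $R=k[F_1,\dots,F_n]$. Complete $R$ at $\maxi=(x_1,\dots,x_n)$; because $\det J\in k^{*}$ the elements $F_1,\dots,F_n$ form a regular system of parameters, so $\widehat{R}_{\maxi}=k[[F_1,\dots,F_n]]$ and the injective completion map sends $g\in R$ to its Taylor expansion $\sum_\alpha\tfrac{1}{\alpha!}(d^\alpha g)(0)\,F^\alpha$, with $d^\alpha=d_1^{\alpha_1}\cdots d_n^{\alpha_n}$. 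Local nilpotence makes this sum finite for each $g$, so the image of $R$ lies in $k[F_1,\dots,F_n]$, forcing $R=k[F_1,\dots,F_n]$. Finally, the surjection $k[y_1,\dots,y_n]\to R$, $y_i\mapsto F_i$, has kernel a prime $\idealp$ with $\dim(k[y_1,\dots,y_n]/\idealp)=\dim R=n=\dim k[y_1,\dots,y_n]$, hence $\idealp=0$; its inverse provides a polynomial inverse of $F$, so the Jacobian Conjecture holds.

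The step I expect to be the main obstacle is $(3)\Rightarrow(1)$, and within it the promotion from local finiteness to local nilpotence: it rests on the Jordan--Chevalley decomposition for locally finite derivations and on careful bookkeeping of the induced grading, whereas the completion/Taylor argument and the closing dimension count are routine once local nilpotence is in hand.
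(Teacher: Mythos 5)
This theorem is quoted from Nowicki (\cite[Theorem 5]{Now86}) and used in the paper as a black box; the paper contains no proof of it, so there is nothing internal to compare your argument against. Judged on its own, your proof is correct in all essentials and is close in spirit to Nowicki's original route: the core of both is the dictionary between commutative $R$-bases $\{d_1,\dots,d_n\}$ of $\derk(R)$ and systems $F_1,\dots,F_n$ with $d_i(F_j)=\delta_{ij}$ and $\det(\partial F_j/\partial x_k)\in k^{*}$; you package this nicely via the dual basis of $\Omega^1_{R/k}$, the Cartan formula and the algebraic Poincar\'e lemma, which makes the equivalence self-contained. The implications (1)$\Rightarrow$(2) (conjugation of $\partial_{x_i}$ by the automorphism $x_i\mapsto F_i$) and (2)$\Rightarrow$(3) are fine, and your (3)$\Rightarrow$(1) -- Jordan--Chevalley to upgrade locally finite to locally nilpotent, then the Taylor expansion in $\widehat{R}_{\maxi}=k[[F_1,\dots,F_n]]$ to conclude $R=k[F_1,\dots,F_n]$ and hence invertibility of $F$ -- is sound. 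Two small points should be made explicit. First, the eigenvalue grading for the semisimple part $s$ should be taken after base change to $\overline{k}$ (char-$0$ fields are perfect, so $s$ and $\nu$ are defined over $k$, and local nilpotence descends from $R\otimes_k\overline{k}$); as written, ``$R=\bigoplus_\lambda R_\lambda$'' need not hold over $k$ itself. You also lean on the standard but nontrivial fact that the semisimple and locally nilpotent parts of a locally finite derivation are again derivations, which deserves a citation. Second, the finiteness of the Taylor support uses commutativity, not just local nilpotence of each $d_i$ separately: from $d_i^{N_i}(g)=0$ one gets $d^{\alpha}(g)=0$ whenever some $\alpha_i\ge N_i$ by commuting $d_i^{\alpha_i}$ next to $g$; this is a one-line remark, but without it the vanishing of mixed monomials is not automatic. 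With these patches the argument is complete, and it has the merit of being essentially self-contained where the paper simply defers to Nowicki.
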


\begin{cor} Let $\mathfrak{D}$ a set of commute $k$-derivations of $R$ such that $R$ is $\mathfrak{D}$-simple, $\partial_{x_1},\ldots,\partial_{x_{n-1}} \in \mathfrak{D}$ and the Jacobian Conjecture is true in $R$. Then, there exist $d \in \mathfrak{D}$
such that $\{\partial_{x_1},\ldots,\partial_{x_{n-1}},d \}$ is a locally nilpotent commutative base of the $R$-module $\derk(R)$. In particular, $d$ is a $k$-derivation locally nilpotent.
\end{cor}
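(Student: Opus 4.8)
The plan is to assemble the corollary directly from Theorem~\ref{thm1}, the Remark following it, and the quoted Theorem~5 of Nowicki. First I would apply Theorem~\ref{thm1}: since $\mathfrak{D}$ is a commuting set containing $\partial_{x_1},\ldots,\partial_{x_{n-1}}$ and $R$ is $\mathfrak{D}$-simple, case (a) is excluded — in case (a) every derivation of $\mathfrak{D}$ kills $x_n$, so $\langle x_n\rangle$ is a nontrivial $\mathfrak{D}$-stable ideal, contradicting simplicity (this is the same observation used at the end of the proof of Theorem~\ref{thm1}). Hence case (b) holds, and moreover $v_n(x_n)$ is a nonzero scalar $\beta$ and some $\lambda_j\neq 0$. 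This yields a derivation $d=\sum_{l=1}^{n-1}(\lambda_j v_l(x_n)+c_l^j)\partial_{x_l}+\lambda_j\beta\,\partial_{x_n}\in\mathfrak{D}$ such that $R$ is $\{\partial_{x_1},\ldots,\partial_{x_{n-1}},d\}$-simple.

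Next I would verify that $\{\partial_{x_1},\ldots,\partial_{x_{n-1}},d\}$ is in fact a commutative base of the free $R$-module $\derk(R)$, not merely a subset witnessing $\mathfrak{D}$-simplicity. Commutativity is immediate, these being elements of the commuting family $\mathfrak{D}$ (alternatively, it can be checked directly as in Theorem~\ref{thm1}). That the set is a base follows because the matrix expressing $\partial_{x_1},\ldots,\partial_{x_{n-1}},d$ in terms of the standard base $\partial_{x_1},\ldots,\partial_{x_n}$ is lower triangular with diagonal entries $1,\ldots,1,\lambda_j\beta$; its determinant $\lambda_j\beta$ lies in $k^{*}\subset R^{*}$, so the matrix is invertible over $R$. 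This is precisely the content recorded in the Remark following Theorem~\ref{thm1}.

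Finally I would invoke the quoted Theorem~5 of Nowicki: since the Jacobian Conjecture is assumed true in $R=k[x_1,\ldots,x_n]$, every commutative basis of the $R$-module $\derk(R)$ is locally nilpotent. Applying this to the particular commutative base $\{\partial_{x_1},\ldots,\partial_{x_{n-1}},d\}$ constructed above shows it is locally nilpotent; in particular each $\partial_{x_i}$ (trivially so) and, most importantly, $d$ are locally nilpotent $k$-derivations, which is the assertion of the corollary.

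I do not expect a genuine obstacle here: the substance of the statement is the reduction already carried out in Theorem~\ref{thm1}, and the corollary is obtained by feeding its output into Nowicki's equivalence. The only steps demanding a line of care are ruling out case (a) of Theorem~\ref{thm1} under the $\mathfrak{D}$-simplicity hypothesis and checking that the change-of-base matrix has unit determinant; both are routine.
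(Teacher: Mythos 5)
Your proposal is correct and follows essentially the same route as the paper: it feeds the derivation $d$ produced by Theorem~\ref{thm1} into Nowicki's Theorem~5, with the paper relegating the verification that $\{\partial_{x_1},\ldots,\partial_{x_{n-1}},d\}$ is a commutative base of $\derk(R)$ to the preceding Remark, which you simply spell out via the unit determinant $\lambda_j\beta$ of the change-of-base matrix. No gaps; your write-up is just a more detailed version of the paper's ``immediate consequence'' argument.
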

\begin{proof} The proof is immediate consequence of the (\cite[Theorem 5.]{Now86}).
\end{proof}

\begin{ques}A ring is called $w$-\emph{differentially simple} if it is simple relative to a family with $w$ derivations. Recall that we are assuming $R=k[x_1,\ldots,x_n]$, then we know that $R$ is $1$-differentially simple and $\rm dim(R)$-differentially simple as well. However, $n=\rm dim(R)$ is not necessarily the smallest $w$ for which such a ring can be $w$-differentially simple (see \cite{LC}). Thus, one may ask: what is the smallest positive integer $w \neq 1$ such that $R$ is $w$-differentially simple and all $w$ derivations are nonsimple and commute?

\end{ques}

\end{document}